\numberwithin{equation}{section}
\newcommand{\Ker}{\mathrm{Ker}}
\newcommand{\im}{\mathrm{Im}\,}
\newcommand{\Ext}{\mathrm{Ext}}
\newcommand{\rk}{\mathrm{rk}}
\newcommand{\Coker}{\mathrm{Coker}}
\newtheorem{theorem}{{\textbf Theorem}}[section]
\newtheorem{corollary}[theorem]{{\textbf Corollary}}
\newtheorem{lemma}[theorem]{{\textbf Lemma}}
\newtheorem{defit}[theorem]{{\textbf Definition}}
\newtheorem{remit}[theorem]{{\textbf Remark}}
\newenvironment{remark}{\begin{remit}\rm}{\end{remit}}
\title{Excess dimensions of Brill--Noether schemes of rank two stable bundles}
\author{Ali Bajravani}
\address[]
{Department of Mathematics, Faculty of Basic Sciences, Azarbaijan Shahid Madani University, Tabriz, I.\ R.\ Iran., P.\ O.\ Box: 53751-71379.}
\email{bajravani@azaruniv.ac.ir}
\address{School of Mathematics, Institute for Research in Fundamental Sciences (IPM), P.O. Box: 19395-5746. \ Tehran, Iran.}
\keywords{Brill--Noether locus, Degeneracy locus, Stable vector bundle, Elementary transformation}
\subjclass[2010]{14H60 (14M12)}
\begin{document}

\begin{abstract} 
	We use results of M. Aprodu and E. Sernesi to extend a result by Fulton--Harris--Lazarsfeld in Brill--Noether theory of line bundles 
	to Brill--Noether loci of stable bundles inside the moduli space of rank two stable vector bundles on a smooth projective algebraic curve. As a consequence; if $B^k_{2, d}$ is of expected dimension, then we prove that its smoothness is equivalent with the smoothness of $B^{k}_{2, d+1}$.
 \end{abstract}

\maketitle

\vspace{-0.4cm}

\section{Introduction}
Let $C$ be a smooth projective algebraic curve of genus $g$. For integers $r$ and $d$ with $0\leq 2r\leq d$,
the 
Brill--Noether scheme of line bundles, $W^r_d$, parameterizes line bundles of degree $d$ with the space of global sections of dimension at least $r+1$. 
Brill--Noether theory (see for example \cite{GH}) computes the expected dimensions of these schemes in terms of $d$, $r$ and $g$, and shows that this dimension
	is attained for a general curve. However, the dimension may be strictly
	larger when $C$ is special in the scense of moduli. 
The basic Martens' theorem bounds the dimension of $W^r_d$ in terms of
$d$ and $r$, on arbitrary smooth projective curves. Variations of this basic result have been obtained in the literature, see \cite{Ox}, \cite{Capo10}, \cite{Baj15} and \cite{Baj19}. See also \cite[Theorem 3.5]{BH} and \cite[Theorem 3.7]{BH}.
Another result concerning dimensions of Brill--Noether schemes of line bundles, which holds on arbitrary curves,
is the excess dimension inequality.
William Fulton, Joe Harris and Rob Lazarsfeld proved in \cite{FHL} that the difference of dimensions of $W^r_
d$ and $W_d^{r+1}$, (resp., of $W_d^r$ and $W_d^{r+1}$), are
controllable in terms of $r$ (resp., in terms of $r$, $d$ and $g$).
A significant consequence of their result is that if $\dim W^r_d\geq r+1$, then $W^r_{d-1}$ would be non-empty.

 Marian Aprodu and Edoardo Sernesi have extended the excess dimension inequalities to the schemes of secant loci of very ample line bundles. See \cite{AS}. 
Michael Kemeny used their result in studying the syzygies of curves of arbitrary gonalities, see \cite{Kemeny}. As well,  in \cite{Baj17}, the author uses Aprodu-Sernesi' excess dimension result to obtain a Mumford type theorem for schemes of secant loci.

Since some bundles 
appeared in constructing $W^r_d$'s turns out to be ample, no non-emptiness prerequisite is needed in Fulton-Harris-Lazarsfeld framework. But, this positivity property fails to be hold in Aprodu and Sernesi' general setting. So they enter a non-emptiness assumption, which is slightly a weaker hypothesis. 

If $X$ is an integral algebraic scheme and
	{\small\begin{align}\label{ASdiagram}
	\xymatrix {
		0\ar[r] & \mathbb{L} \ar[r] &\mathbb{F}_1\ar[r]^{\pi}&\mathbb{F}\ar[r] & 0,\\
		&&\mathbb{E}\ar[u]^{\sigma}\ar[ur]_{\pi\circ\sigma},&&
	}
	\end{align}
is an arbitrary diagram of bundles on $X$ with $\rk (\mathbb{E})=m, \rk(\mathbb{F})=n, \rk(\mathbb{F}_1)=n+1$, then
 Aprodu and Sernesi compare dimensions of $D_k(\sigma)$, 
$D_{k+1}(\sigma)$ and $D_k(\pi\sigma)$, where for a morphism $\sigma: E\rightarrow F$ of vector bundles on $X$, the loci 
$D_k(\sigma)$ is defined as 
\begin{displaymath}
D_k(\sigma):=\{x\in X\mid \rk(\sigma_x)\leq k\}.\end{displaymath}
Consequently Fulton--Harris--Lazarsfeld's result is a special case of Aprodu--Sernesi' result. 

We use Aprodu and Sernesi' general machinery to study the excess-dimension problem for Brill--Noether loci of stable vector bundles of rank $2$ and degree $d$ with the space of global sections at least of dimension $k$ inside the moduli space of rank two stable vector bundles, $U(2, d)$.   
The closure of some suitable loci, denoted by $B^1_{2, d+1, p}$, consisting of general dual elementary transformations of general bundles in $B^1_{2, d+1}$ is  
 a divisor in 
 $U(2, d+1)$ for $1\leq d\leq 2(g-1)-1$. On a suitable dense sub-locus $\mathcal{U}\subseteq U(2, d+1)$, consisting of general dual elementary transformations of general bundles in $U(2, d)$, we obtain a diagram of vector bundles. As Lemma \ref{Stability lemma}(i) indicates, there exists a dense subset of $U(r, d+1)$ consisting of general dual elementary transformations of general bundles in $U(r, d)$. But it is not known if this loci intersects the loci $B^k_{r, d}$ for $r\geq 3$. As for the case $r=2$, we observe that such a dense subset has non-empty intersection with any non-empty component of $B^k_{2, d}$. See lemma \ref{Stability lemma}(ii).
So comparing dimensions of degeneracy loci of the diagram gives a right comparision of dimensions of various $B^k_{2, d}$'s.
 The diagram we use, diagram \ref{maindiagram}, is a \textquotedblleft dual\textquotedblright
   version of Aprodu-Sernesi' diagram (\ref{ASdiagram}). 
   We apply theorem \ref{SDual}, by which we obtain our main result. See Theorem (\ref{Main B-N1}).  
Using theorem \ref{Main B-N1} together with a comparision of dimension of tangent spaces we relate the smoothness 
 of $B^{k}_{2, d}$ to those of $B^{k}_{2, d+1}$. See corollary \ref{cor}.
\subsection*{Notation} 
For a sheaf $F$ on $C$, we abbreviate $H^i ( C, F )$ and $h^i ( C, F )$ to $H^i ( F )$ and $h^i ( F )$, respectively. If $\sigma: E\rightarrow F$ is a morphism of locally free sheaves $E$, $F$ of ranks $\alpha, \beta$ on an algebraic variety $X$,  respectively, then for $p\in X$ the $\alpha\times \beta$ matrix $\sigma\mid_p$ acts on an $\alpha$-vector $\nu\in E\mid_p$ by $^t \nu\cdot\sigma\mid_p$.
\section{Excess dimensions of Brill-Noether loci of rank two stable bundles}\label{sec3}
\subsection{A dual version of Aprodu and Sernesi' result}\label{AS-Section}
Let $X$ be an integral algebraic variety and consider a diagram of vector bundles
	{\small\begin{align}\label{maindiagram}
	\xymatrix {
		0\ar[r] & \mathbb{F} \ar[dr]_{\sigma i}\ar[r]^{i} &\mathbb{F}_1\ar[r]^{\pi}\ar[d]^{\sigma}&\mathbb{L}\ar[r] & 0,\\
		&&\mathbb{E}.&&
	}
	\end{align}
}
\begin{theorem}\label{SDual}
Suppose $\mathbb{E}, \mathbb{F}_1$ and $\mathbb{F}$ are vector bundles of ranks $m$, $n+1$ and $n$, respectively, which are set in a diagram
as \ref{maindiagram}.	Assume moreover that no irreducible component of $D_k(\sigma i)$ is contained in $D_{k}(\sigma)$. \\
\begin{enumerate}
	\item{ If $D_k(\sigma i)$ is nonempty, then
$$\dim D_k(\sigma i) \ \ge  \dim D_{k+1}(\sigma)-(n-k),$$}
\item{ If $D_k(\sigma)$ is nonempty, then
$$	\dim D_k(\sigma)\geq \dim D_k(\sigma i)- (m-k),$$
$$	\dim D_k(\sigma)\geq \dim D_{k+1}(\sigma)-(m+n-2k).$$
}
\end{enumerate}	
\end{theorem}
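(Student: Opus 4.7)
The plan is to deduce Theorem \ref{SDual} directly from the Aprodu--Sernesi theorem by dualizing diagram (\ref{maindiagram}). The diagrams (\ref{ASdiagram}) and (\ref{maindiagram}) are formally dual to one another, so the duality functor ought to reduce everything to the case already treated in \cite{AS}.

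First, I would write out the dualized diagram explicitly. Dualizing the short exact sequence $0 \to \mathbb{F} \xrightarrow{i} \mathbb{F}_1 \xrightarrow{\pi} \mathbb{L} \to 0$ yields $0 \to \mathbb{L}^\vee \xrightarrow{\pi^\vee} \mathbb{F}_1^\vee \xrightarrow{i^\vee} \mathbb{F}^\vee \to 0$, and $\sigma: \mathbb{F}_1 \to \mathbb{E}$ dualizes to $\sigma^\vee: \mathbb{E}^\vee \to \mathbb{F}_1^\vee$. Setting $\mathbb{E}' := \mathbb{E}^\vee$, $\mathbb{F}' := \mathbb{F}^\vee$, $\mathbb{F}_1' := \mathbb{F}_1^\vee$, $\mathbb{L}' := \mathbb{L}^\vee$, this produces a diagram of the shape (\ref{ASdiagram}) with $\rk \mathbb{E}' = m$, $\rk \mathbb{F}' = n$, $\rk \mathbb{F}_1' = n+1$, in which the morphism playing the role of their $\sigma$ is our $\sigma^\vee$ and the one playing the role of their $\pi$ is our $i^\vee$.

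Next, I would observe that for any morphism $\varphi: V \to W$ of vector bundles and any $x \in X$, the fiberwise rank of $\varphi_x$ equals that of $\varphi_x^\vee$, so the degeneracy ideals (generated by the $(k+1)$-minors of the local matrix representative) coincide, and hence $D_k(\varphi) = D_k(\varphi^\vee)$ as subschemes of $X$. Applying this to $\sigma$ and to $\sigma i$, and noting that $(\sigma i)^\vee = i^\vee \sigma^\vee$ is precisely the composition analogous to the $\pi\sigma$ appearing in (\ref{ASdiagram}), the non-containment hypothesis and the non-emptiness conditions in Theorem \ref{SDual} translate verbatim into the corresponding Aprodu--Sernesi hypotheses for the dualized diagram. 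Their three excess-dimension estimates then give the three inequalities stated in parts (i) and (ii), with the rank parameters $m$, $n$ and $k$ matching on the nose.

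The work is therefore mostly bookkeeping rather than new geometry: one has to check that the Aprodu--Sernesi output delivers precisely the three inequalities asked for, with the correct substitution of rank parameters under dualization, and that the scheme structures of $D_k(\varphi)$ and $D_k(\varphi^\vee)$ agree in the sense used throughout \cite{AS}. The principal obstacle I anticipate is the careful matching of indexing conventions between (\ref{ASdiagram}) and (\ref{maindiagram}) -- in particular keeping track of which of $\mathbb{F}$, $\mathbb{F}_1$, $\mathbb{L}$ is the kernel and which is the quotient after dualizing -- rather than any further algebro-geometric construction.
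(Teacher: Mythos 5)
Your proposal follows the same route as the paper's own (two-line) proof: dualize diagram (\ref{maindiagram}) to recover the Aprodu--Sernesi shape (\ref{ASdiagram}), use $D_k(\varphi)=D_k(\varphi^\vee)$, and read off the three inequalities from \cite[Th.~3.1]{AS}. Your version simply spells out the bookkeeping the paper leaves implicit, and it is correct.
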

\begin{proof}
	For a morphism $\gamma: E\rightarrow  F$ of vector bundles on an integral algebraic scheme $X$, one has $D_k(\gamma)=D_k(\gamma^*)$, where $\gamma^*:F^*\rightarrow  E^*$ is the dual homomorphism of $\gamma$. 
	The theorem follows, because diagram \ref{maindiagram} is a dual version of the diagram considered in proposition \cite[Th. 3.1]{AS}. 
\end{proof}
\subsection{A divisorial component in $B^1_{r, d+1}$}\label{3.0.1}
Assume that $g\geq 2$. Let $U(r, d)$ denote the moduli space of stable vector bundles of rank $r$ and degree $d$. Then, $U(r, d)$ admits an $\acute{e}$tale finite cover on which there exists a Poincar$\acute{e}$  bundle. Precisely, there exists an $\acute{e}$tale finite cover $\pi: \mathcal{U}\rightarrow U(r, d)$ and a bundle $\mathcal{E}_d\rightarrow \mathcal{U}$ such that for each $E\in \mathcal{U}$ one has $\mathcal{E}_{d\mid \lbrace E \rbrace\times C}\cong \pi(E)$. Furthermore, for any $\lambda \in \mathbb{N}$,
$U(r, d)\cong U(r, d+\lambda r)$ by an isomorphism $\theta$, where 
$\theta: U(r, d+\lambda r) \rightarrow U(r, d)$ is given by $E\mapsto E\otimes \mathcal{O}(-D)$ with $D:=p_1+\ldots +p_\lambda,$ for some fixed $p_i\in C$. 

For an integer
$\delta$ with $\delta:=d+\lambda r > 2r(g-1)$, consider the projections
\begin{align}\label{diag1}
\xymatrix {
	\!\!\!\!\!\!\!\!\!\!\!\!\!\!\! \ar[d]^{\pi_2} U(r, \delta)\times C \ar[r]^{\pi_1}&C\\
	U(r, \delta),&
}
\end{align}
and set $\mathbb{E}_\delta:=(\pi_2)_*(\mathcal{E}_\delta)$, $\mathbb{F}_\delta:=(\pi_2)_*(\mathcal{E}_\delta\otimes \pi_1^*\mathcal{O}_D(D))$.
The bundles $\mathbb{E}_\delta$ and $ \mathbb{F}_\delta$ would be of ranks $\delta-r(g-1)$ and $\lambda r$, respectively. See \cite{G-T}.

The loci of stable vector bundles $E$, of rank $r$ and degree $d$ with the space of global sections of dimension at least $k$ is up to the isomorphism $\theta$ the $(\delta-r(g-1)-k)$-th degeneracy locus of the evaluating morphism of bundles
\begin{align}\label{gamma}
\gamma_\delta: \mathbb{E}_\delta\rightarrow \mathbb{F}_\delta.
\end{align}
Observe that for $E\in U(r, \delta)$ with $\delta > 2r(g-1)$ one has $h^0(E)=\delta-2r(g-1)$ and so the locus $E\in U(r, \delta)$ with $\delta > 2r(g-1)$ and $h^0(E)=1$ might be empty. By abuse of notations, we denote by $B^1_{r, \delta}$ the loci $\theta^{-1}(B^1_{r, d})$. 	

Recall that in an extension of vector bundles $0\rightarrow E \rightarrow F \rightarrow \mathcal{O}_p \rightarrow 0$, the bundle $E$ is called an elementary transformation of $F$ by $p$ and $F$ is called a dual elementary transformation of $E$ by $p$. 

The Segre invariant $s_1(E)$ is defined to be
\begin{align}
s_1(E) := \min \{\deg(E) - 2 · \deg(L) : L \subset \mbox{E a line subbundle}\}.
\end{align}
Then E is stable if and only if $s_1(E) \geq 1$. 
\begin{lemma}\label{Stability lemma} 
	
(i) Assume that $1\leq d\leq n(g-1)-1$ and $p\in C$ is generic. Then, a general extension as $0 \rightarrow E \rightarrow F \rightarrow \mathcal{O}_p \rightarrow 0,$ with $E$ a general 
element of $B^i_{r, d}$, $0\leq i\leq 1$, is stable. 

(ii) Suppose $E$ is an arbitrary stable rank two vector bundle and $p\in C$ generic. Then, a general dual elementary transformation of $E$ by $p$ is stable.  

(iii) Assume $p\in C$ is generic and $E$ an arbitrary stable rank two vector bundle. Then,
 a general elementary transformation of $E$ by $p$ is stable. 
\end{lemma}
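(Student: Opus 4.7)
\emph{Plan.} The strategy is the same for all three parts: a destabilizing sub-bundle of the transformed bundle is traced, via saturation or restriction, to a near-maximal sub-bundle of $E$; stability of $E$ constrains such candidates tightly, and a dimension count shows that the ``bad'' transformations form a proper subvariety of the parameter space. For (iii), I would start from an elementary transformation $E' \subset E$ (so $\deg E' = \deg E - 1$) and a hypothetical destabilizing line sub-bundle $L \subset E'$. Passing to the saturation $\widetilde{L} \subset E$ gives $\deg \widetilde{L} \ge \deg L$, while stability of $E$ bounds the right-hand side. A short parity check then shows that $E'$ is automatically stable when $\deg E$ is even, whereas for $\deg E = 2m+1$ the only potential $L$ satisfies $\deg L = m$ and $\widetilde{L} = L$, so $L$ is already a maximal line sub-bundle of $E$. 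Since elementary transformations of $E$ by $p$ are parameterized by $\mathbb{P}(E|_p^{\,*})$ and those containing a given $L$ form a single point (namely the one whose kernel is $L|_p$), a generic choice of $p$ and of surjection avoids the finitely many maximal line sub-bundles and produces a stable $E'$.

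For (ii), the analogous analysis runs on a hypothetical destabilizing line sub-bundle $L \subset F$ of a dual elementary transformation $0 \to E \to F \to \mathcal{O}_p \to 0$. Either $L \subset E$, in which case $\mu(L) \ge \mu(F) > \mu(E)$ already contradicts stability of $E$, or the composite $L \to F/E = \mathcal{O}_p$ is surjective at the stalk; in the latter case $L \cap E$ saturates in $E$ to a near-maximal line sub-bundle $L' \subset E$, and $L$ is a compatible dual elementary transformation of $L'$ at $p$. Compatibility translates into the extension class, viewed in $\mathrm{Ext}^1(\mathcal{O}_p, E) \cong E|_p$, lying in the one-dimensional subspace $L'|_p \subset E|_p$, which is a codimension-one condition in $\mathbb{P}(E|_p)$. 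Since the candidate $L'$'s form a proper subvariety of the relevant Segre stratum, a generic extension avoids all of them.

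Finally, for (i), I would extend the same mechanism to rank $r$: any destabilizing sub-bundle of $F$ of rank $s < r$ restricts to a rank-$s$ sub-bundle of $E$ of near-maximal slope, and each such candidate imposes a non-trivial condition on the extension class in $\mathrm{Ext}^1(\mathcal{O}_p, E) \cong E|_p$. The conclusion then follows by comparing the dimension of $B^i_{r,d} \times E|_p$ with that of the bad locus, using the hypothesis $1 \le d \le r(g-1)-1$ to keep $B^i_{r,d}$ of the expected dimension and $i \le 1$ to keep the Brill--Noether constraint mild. The main obstacle I anticipate is a uniform dimension bound on the Segre-type strata over a general $E \in B^i_{r,d}$; this is automatic in rank two but requires care in general rank, and for $i = 1$ one must additionally verify that the constraint $h^0(E) \ge 1$ does not force every admissible extension class into the bad locus.
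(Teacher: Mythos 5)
Your parts (ii) and (iii) are essentially the paper's argument (the paper proves (ii) directly via the Segre invariant and obtains (iii) by dualizing): if $s_1(E)\geq 2$ every (dual) elementary transformation at $p$ is stable, and if $s_1(E)=1$ one must choose the transformation so that the distinguished one-dimensional subspace of $E\mid_p$ avoids $L\mid_p$ for every maximal line subbundle $L$. The one point you assert rather than prove is that this bad set is proper: saying that the candidate $L$'s ``form a proper subvariety of the relevant Segre stratum'' (resp.\ that they are ``finitely many'') is exactly the statement that needs justification, since a priori a positive-dimensional family of maximal line subbundles could sweep out all of $\mathbb{P}(E\mid_p)$. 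The paper closes this by citing \cite[Prop.\ 4.2]{L-Na}, which gives finiteness of maximal line subbundles when $s_1(E)=1$; with that reference inserted, your (ii) and (iii) are complete and coincide with the paper's proof.

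Part (i) is where there is a genuine gap, and you have flagged it yourself. Your plan needs, for a bundle $E$ that is general only in the Brill--Noether locus $B^1_{r,d}$ and for every rank $s<r$, a bound on the family of near-maximal rank-$s$ subsheaves $S\subset E$ strong enough to ensure that the union of the $s$-dimensional subspaces $S\mid_p\subseteq E\mid_p\cong\Ext^1(\mathcal{O}_p,E)$ is a proper subset; no such bound is supplied, and your closing caveats (``requires care in general rank'', ``one must additionally verify \dots'') are precisely the unproved statements, so the dimension comparison you invoke never gets off the ground. The paper avoids this issue entirely by a different mechanism: since $B^0_{r,d}=U(r,d)$ and $B^1_{r,d}$ are irreducible for $0\leq d\leq r(g-1)$ by \cite[Theorem II.3.1]{S}, and stability is open on the (irreducible) projectivized bundle of extensions of $\mathcal{O}_p$ by bundles in $B^i_{r,d}$, it suffices to exhibit a \emph{single} extension $0\to E\to F\to\mathcal{O}_p\to 0$ with $E$, $F$ stable and $h^0(E)=1$. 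This is done explicitly: writing $d=r\alpha+t$, one takes general extensions of torsion sheaves supported at generic points by $\left(\bigoplus_{i=1}^{r-1}L_i\right)\oplus H$ with $L_i\in B^0_{1,\alpha}\setminus B^1_{1,\alpha}$ and $H\in B^1_{1,\alpha}\setminus B^2_{1,\alpha}$; stability of $F$ and of $E$ follows from Mercat's theorem \cite{Me} (after checking that the restriction map on $\Ext^1$ groups is surjective, so a general $F$ induces a general $E$), and $h^0(E)=h^0(F)=1$ follows from genericity of the points together with \cite[Lemma 2.5]{N}. To complete your version of (i) you would either have to prove the Segre-stratum dimension bounds you anticipate, which is substantially harder and not available over a general element of $B^1_{r,d}$, or switch to this irreducibility-plus-one-example argument.
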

\begin{proof} (i) Observe that for $0\leq d\leq r(g-1)$, the loci $B^1_{r, d}$ and $B^0_{r, d}:=U(r, d)$ are irreducible by \cite[Theorem II.3.1]{S}. So,
according to the openness of stability, it is enough to prove that there exists one special extension in each of them with the stated property.	

We prove the case $i=1$. The other case is similar. Set $d=r\alpha +t$ and observe then that $\alpha \leq g-1$. Take line bundles $L_1, \ldots, L_{r-1}$ in the loci 
$B^0_{1, \alpha}\setminus B^1_{1, \alpha}$ and a line bundle $H\in B^1_{1, \alpha}\setminus B^2_{1, \alpha}$ with the property that no two of these line bundles are isomorphic.
 For generic points $p_1, \ldots, p_{t}$ in $C$, a general extension as
\begin{align}\label{stabilitydiag}
0 \rightarrow \left(\bigoplus _{i=1}^{r-1} L_i\right)
\oplus H \rightarrow F \rightarrow \left(\bigoplus _{i=1}^{t} \mathcal{O}_{p_i}\right)\oplus \mathcal{O}_{p} \rightarrow 0,
\end{align}
is stable by the Theorem of Mercat. See \cite[p. 76]{Me}.  
A general extension $F$ as in \ref{stabilitydiag} is mapped by $\mu$ to a general element of 
$\Ext^1\left((\bigoplus _{i=1}^{t} \mathcal{O}_{p_i}), (\bigoplus _{i=1}^{r-1} L_i)
\oplus H\right)$, where 
$$\mu: \Ext^1\left((\bigoplus _{i=1}^{t} \mathcal{O}_{p_i})\oplus \mathcal{O}_{p}, (\bigoplus _{i=1}^{r-1} L_i)
\oplus H\right) \longrightarrow \Ext^1\left((\bigoplus _{i=1}^{t} \mathcal{O}_{p_i}), (\bigoplus _{i=1}^{r-1} L_i)
\oplus H\right)$$
is the pull back morphism of the inclusion of sheaves
$\bigoplus _{i=1}^{t} \mathcal{O}_{p_i} \rightarrow \left(\bigoplus _{i=1}^{t} \mathcal{O}_{p_i}\right)\oplus \mathcal{O}_{p}$.
Observe that $\mu$ is the second connecting homomorphism in the long exact cohomology sequence associated to the exact sequence 
$$0 \rightarrow \bigoplus _{i=1}^{t} \mathcal{O}_{p_i} \rightarrow \left(\bigoplus _{i=1}^{t} \mathcal{O}_{p_i}\right)\oplus \mathcal{O}_{p}\rightarrow \mathcal{O}_p \rightarrow 0.$$	
So $\Coker(\mu) \subseteq \Ext^2\left((\bigoplus _{i=1}^{t} \mathcal{O}_{p_i}), (\bigoplus _{i=1}^{r-1} L_i)\oplus H\right)$ and it has to vanish. Consequently $\nu$ would be surjective.
Therefore, $\dim \im \mu = \dim \Ext^1\left((\bigoplus _{i=1}^{t} \mathcal{O}_{p_i}), (\bigoplus _{i=1}^{r-1} L_i)
\oplus H\right)$. 

Since, again by Mercat's theorem, a general element of $\Ext^1\left((\bigoplus _{i=1}^{t} \mathcal{O}_{p_i}), (\bigoplus _{i=1}^{r-1} L_i) \oplus H\right)$ is stable, we obtain an extension as $0 \rightarrow E \rightarrow F \rightarrow \mathcal{O}_p \rightarrow 0,$ with $E$ and $F$ stable. 

The genericity of $p_i,$ $p$ with $1\leq i\leq t$ and the genericity of $E$ together with lemma \cite[2.5]{N} implies that $h^0(E)=h^0\left((\bigoplus _{i=1}^{r-1} L_i)
\oplus H\right)=h^0(F)$. So $h^0(E)=h^0(F)=1$, as required.

(ii) Either the Segre invariant $s_1(E)\geq 2$ or $s_1(E)=1$.  
If $s_1(E) \geq 2$, then for
any $L \subset E $ we have
$$(\deg(E)+1)-(2 · \deg(L)-2) \geq 1;$$
that is,
$$\deg(F) - 2 · \deg(L(p)) = s_1(F) \geq 1,$$
for any elementary transformation $0\rightarrow  E \rightarrow F \rightarrow \mathcal{O}_p \rightarrow 0$. Thus $F$ is stable.

If $s_1(E) = 1$, then by \cite[Prop. 4.2]{L-Na} the number of maximal line subbundles of $E$ is finite. Let $0\rightarrow  E \rightarrow F \rightarrow \mathcal{O}_p \rightarrow 0$ be an elementary transformation with $\Ker(E_{\mid_p} \rightarrow F_{\mid_p}) \neq L{\mid_p}$ for any maximal subbundle $L$. So if $M \subset E$ is a line subbundle with $M_{\mid_p} = \Lambda$, then $\deg(E) - 2 · \deg(M) \geq 3$. As above, $(\deg(E)+1)-(2 · \deg(M)-2) \geq 1;$ that is,
$$\deg(F) - 2 · \deg(M(p)) = s_1(F) \geq 1.$$
	Also, if $L$ is a maximal subbundle of $E$, then by the choice of $\Lambda$, also $L$ is a subbundle of
	$F$, and
	$$\deg(F) - (2 · \deg(L)) \geq 2.$$
	Thus $s_1(F) \geq 1$. Hence a general elementary transformation $0\rightarrow  E \rightarrow F \rightarrow \mathcal{O}_p \rightarrow 0$ is a stable
	vector bundle.
	
(iii) This is immediate by dualizing (ii). 
\end{proof}
Motivated by Lemma \ref{Stability lemma}, for a generic point $p\in C$ and $i=0, 1$, we set
{\small
	\begin{align}\label{convention}
B^i_{r, d+1, p}:=\lbrace F\in B^i_{r, d+1}\mid F\!\!\!\!\!\quad \mbox{is a general extension}\!\!\!\!\!\quad
0\rightarrow E \rightarrow F \rightarrow \mathcal{O}_p \rightarrow 0, \quad \!\!\!\!\! E\in B^i_{r, d}  \rbrace,\end{align}
where we have considered $B^1_{r, d}$ the same as its preimage via $\acute{e}$tale map $\mathcal{U}\rightarrow U(r, d)$.

 Furthermore, we set
\begin{align}\label{convention1}
B^0_{r, \delta+1, p}:=\theta^{-1}(B^0_{r, d+1, p}) \quad ,\quad B^1_{r, \delta+1, p}:=\theta^{-1}(B^1_{r, d+1, p}).   \qquad \qquad \qquad
	\end{align}
Observe that the superscript numbers $0$ and $1$ in $B^0_{r, \delta+1, p}$ and $B^1_{r, \delta+1, p}$, respectively, don't refer to the number of linearly independent global sections of the bundles therein. 

Taking Lemma \ref{Stability lemma} into account one obtains $\overline{B^0_{r, \delta+1, p}}=U(r, \delta +1)$. As for $\overline{B^1_{r, \delta+1, p}}$ we have the following	
\begin{theorem}\label{lem 2-2}
Suppose $1\leq d\leq n(g-1)-1$. Then, the loci $\overline{B^1_{r, \delta+1, p}}$ is an irreducible closed subscheme of $B^1_{r, \delta+1}$ of codimension $1$.
\end{theorem}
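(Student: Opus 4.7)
The plan is to exhibit $\overline{B^{1}_{r,\delta+1,p}}$ as the image in moduli of a natural incidence variety of extensions, deduce irreducibility from that of $B^{1}_{r,d}$, and then pin down the codimension by a fiber count. Since $\theta$ is an isomorphism, it suffices to prove the corresponding statement for $\overline{B^{1}_{r,d+1,p}}$ inside $B^{1}_{r,d+1}$ and transport it back via $\theta$.

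First I would introduce the incidence variety
\[
\mathcal{I}\;=\;\{(E,F)\,:\,E\in B^{1}_{r,d},\ 0\to E\to F\to \cO_{p}\to 0\}\big/\!\!\sim,
\]
where $\sim$ identifies isomorphic extensions, together with its two projections $\pi_{1}:\mathcal{I}\to B^{1}_{r,d}$ and $\pi_{2}:\mathcal{I}\dashrightarrow U(r,d+1)$. The projection $\pi_{1}$ realizes $\mathcal{I}$ as a $\PP^{r-1}$-bundle with fiber $\PP(\Ext^{1}(\cO_{p},E))$ over $E$. Using the irreducibility of $B^{1}_{r,d}$ provided by Sundaram's theorem \cite[Thm. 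II.3.1]{S}, I would conclude that $\mathcal{I}$ is irreducible of dimension $\dim B^{1}_{r,d}+(r-1)$. By Lemma~\ref{Stability lemma}(i), a dense open subset of $\mathcal{I}$ parametrizes stable extensions $F$, and its image under $\pi_{2}$ is by construction $B^{1}_{r,d+1,p}$. Taking closure, $\overline{B^{1}_{r,d+1,p}}$ is therefore an irreducible closed subscheme of $B^{1}_{r,d+1}$.

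For the codimension claim, I would study the generic fiber of $\pi_{2}$ over the image. A point of this fiber over $F$ is a surjection $\phi:F\twoheadrightarrow\cO_{p}$, modulo scalar, whose kernel $\Ker(\phi)$ lies in $B^{1}_{r,d}$. Identifying $\phi$ with a hyperplane in $F|_{p}$, the condition $h^{0}(\Ker(\phi))\ge 1$ becomes the requirement that this hyperplane contain $\ev_{p}(s)$ for some nonzero $s\in H^{0}(F)$, while stability of the kernel is secured by Lemma~\ref{Stability lemma}(ii)--(iii). Analyzing this fiber in terms of the image of the evaluation map $H^{0}(F)\to F|_{p}$ gives $\dim\overline{B^{1}_{r,d+1,p}}$; comparing with the expected Brill--Noether dimension relation $\dim B^{1}_{r,d+1}=\dim B^{1}_{r,d}+1$ then yields the codimension-one assertion.

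The hard part will be to make this fiber count rigorous, namely to show that at a general $F$ in the image the admissible surjections $\phi$ form a family of exactly the right dimension, and in particular that $\overline{B^{1}_{r,d+1,p}}$ does not accidentally fill the whole ambient $B^{1}_{r,d+1}$. I expect this to reduce to a tangent-space / $\Ext$ computation at a generic $F$, carried out in the regime $1\le d\le n(g-1)-1$ where $h^{0}$ stays generically minimal on $B^{1}_{r,d}$, so that the evaluation map at $p$ behaves as expected. Once the generic fiber of $\pi_{2}$ is determined, the codimension-one conclusion follows immediately from the dimension of $\mathcal{I}$ and the fiber-dimension theorem, and irreducibility was already established in the previous paragraph.
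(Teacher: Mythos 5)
Your construction is essentially the one the paper itself uses: your incidence variety $\mathcal{I}$ is exactly the projective bundle $\mathbb{P}(\mathbb{V}_\delta)\to B^1_{r,\delta}$ with fibre $\mathbb{P}(\Ext^1(\cO_p,E))$, irreducibility is obtained in the same way (Sundaram's irreducibility of $B^1_{r,d}$ plus irreducible fibres of the first projection), and the codimension is meant to fall out of comparing the fibre dimensions of the two projections. Up to that point the proposal matches the paper.

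The gap is that you stop at precisely the step on which the whole statement turns: the generic fibre dimension of $\pi_2$ is never computed, and you flag it yourself as ``the hard part.'' This is not a routine verification. By your own description, the fibre of $\pi_2$ over $F$ consists of the hyperplanes $H\subset F|_p$ containing a nonzero vector of $\ev_p(H^0(F))$ (with stable kernel). For the count to yield codimension one, this fibre must have dimension $r-1$, the same as the fibres of $\pi_1$. But when $h^0(F)=1$ and the section $s$ satisfies $\ev_p(s)\neq 0$ in $F|_p$ --- which is what a general extension class produces, since the kernel line of $E|_p\to F|_p$ is then a general line of $E|_p$ and misses $\ev_p(H^0(E))$ --- the admissible hyperplanes form only a $\mathbb{P}^{r-2}$, and the fibre-dimension theorem would then give $\dim\overline{B^1_{r,d+1,p}}\geq\dim B^1_{r,d}+(r-1)-(r-2)=\dim B^1_{r,d+1}$, i.e.\ codimension $0$, not $1$. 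So to close the argument you must either show that the general $F$ in the image has all of its sections vanishing at $p$ (so that every hyperplane is admissible and the fibre is the full $\mathbb{P}^{r-1}$), or otherwise justify the fibre dimension $r-1$. For comparison, the paper's proof settles this step by identifying the fibre of $\nu_\delta$ over $F$ with all of $\Ext^1(\cO_p,F^*)$, of dimension $r$, equal to $\dim\Ext^1(\cO_p,E)=\dim\alpha_\delta^{-1}(E)$, and concludes $\dim\overline{B^1_{r,\delta+1,p}}=\dim B^1_{r,\delta}=\dim B^1_{r,\delta+1}-1$; note that this identification does not impose the constraint that the kernel lie in $B^1_{r,\delta}$ --- the very constraint your evaluation-map description makes visible. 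As written, your proposal does not establish the codimension-one claim.
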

\begin{proof}
Since $B^1_{r, \delta+1, p}=\theta^{-1}(B^1_{r, d+1, p})$ by our convention in \ref{convention} and \ref{convention1}, 
Lemma \ref{Stability lemma} implies that $B^1_{r, \delta+1, p}$ is a non-empty subset of $B^1_{r, \delta+1}$, so $\overline{B^1_{r, \delta+1, p}}$ would be non-empty as well. 
Restrict the Poincare bundle $\mathcal{E}_\delta$ to $B^1_{r, \delta}\times C$ and denote its restriction by the same letter. Set
	\begin{align}\mathbb{V}_{\delta}:=R^1(\pi_1)_*\left(\mathcal{E}xt^1\left((\pi_{2, \delta})^*\mathcal{O}_p, \mathbb{E}_\delta\right)\right),
	\end{align}
	where $\mathcal{E}xt^i(F, G)$ denotes the $i-th$ \textquotedblleft ext\textquotedblright
	 sheaves of $F$ and $G$. The sheaf $\mathbb{V}_{\delta}$ is a vector bundle on $B^1_{r, \delta}$ of rank $r$, by \cite[pp. 166-167]{ACGH85}; and the projective bundle 
	\begin{align}\label{alpha}
	\alpha_\delta: \mathbb{P}(\mathbb{V}_{\delta})\rightarrow B^1_{r, \delta}\end{align}
	is the family of $\Ext^1(\mathcal{O}_p, E)$'s as $E$ varies in an open subset of $B^1_{r, \delta}$. Then, one has
	$$\overline{B^1_{r, \delta+1, p}}=\im (\nu_{\delta}),$$
	where the morphism $\nu_{\delta}: \mathbb{P}(\mathbb{V}_{\delta})\rightarrow B^1_{r, \delta+1}$ assigns the stable bundle $F$ to each extension $0\rightarrow E \rightarrow F \rightarrow \mathcal{O}_p \rightarrow 0$. 
	Now, in order to prove that $\overline{B^1_{r, \delta+1, p}}$ is of codimension $1$ in $B^1_{r, \delta+1}$, observe that the
	fibers of both morphisms $\alpha_\delta$ and $\nu_\delta$ are $r$-dimensional. Indeed, for an extension $e: 0\rightarrow E \rightarrow F \rightarrow \mathcal{O}_p \rightarrow 0$ and for $E\in B^1_{r, \delta}$, we have
	$$\dim\nu_\delta^{-1}(e)=\dim \Ext^1(\mathcal{O}_p, F^*)=r \quad , \quad \dim\alpha_\delta^{-1}(E)=\dim \Ext^1(\mathcal{O}_p, E)=r.$$
	Therefore 
\begin{displaymath} \dim \overline{B^1_{r, \delta+1, p}}=\dim B^1_{r, \delta}=\dim B^1_{r, d}=\dim B^1_{r, d+1}-1=\dim B^1_{r, \delta+1}-1,\end{displaymath} 
	by \cite[Theorem II.3.1]{S}, as required. 
	
As for irreducibility of $B^1_{r, \delta+1, p}$, it suffices to prove the irreduciblity of $B^1_{r, d, p}$. If $U\subset B^1_{r, d}$ is the set of $E\in B^1_{r, d}$ in which their general dual elementary transformations are stable, then $U$ is irreducible, as well. Since the fibers of $\alpha_\delta$ are irreducible, so $B^1_{r, \delta+1, p}=\nu_\delta(\alpha_\delta^{-1}(U))$ would be irreducible, as desired.
\end{proof}
\begin{remark}
\noindent
The content of Lemma \ref{Stability lemma} may already exist in the literature, but we were
unable to locate a suitable reference. A similar situation is studied in \cite[Lemma 1, Lemma 2]{Bis}. 
\end{remark}
\subsection{Excess dimension inequality} In this subsection, we apply the results of subsection  \ref{AS-Section} to Brill-Noether loci of stable rank two vector bundles.

For a general $p\in C$, consider $p$ as an effective divisor of degree $1$ on
$C$ and set $\mathcal{L}:=\mathcal{O}(\pi_1^*p)$. Then $\mathbb{L}:=(\pi_2)_*(\mathcal{L})$ is a line bundle on the  
sub-locus $\mathcal{U}:=B^0_{2, \delta+1, p}$. 
As parts (ii) and (iii) of lemma \ref{Stability lemma} show, the dense sub-locus $\mathcal{U}$ has non-empty intersection with 
 any irreducible component $X\subset B^k_{2, \delta+1}$.

Let  
\begin{align}\label{exacts}
0 \rightarrow [(\nu_\delta)_*(\alpha_\delta)^*(\mathbb{E}_\delta)] \stackrel{\alpha}\longrightarrow  \mathbb{F} \rightarrow  \mathbb{L} \rightarrow  0,
\end{align} 
be a general extension of $\mathbb{L}$ by $(\nu_\delta)_*(\alpha_\delta)^*(\mathbb{E}_\delta)$ and
observe that $\mathbb{F}$ is not necessarily a Poincar$\acute{e}$ bundle neither on $U(2, \delta+1)$ nor on $\mathcal{U}$. But it can be used to define a dense sub-locus of B-N loci on $\mathcal{U}$, on which we can compare the dimensions of $B^k_{2, \delta}$ and $B^k_{2, \delta+1}$. 

Now we consider a diagram of vector bundles on $\mathcal{U}$ as
{\small \begin{align}\label{diag0-0}
	\xymatrix {
		0\ar[r] & [(\nu_\delta)_*(\alpha_\delta)^*(\mathbb{E}_\delta)]_{\mid \mathcal{U}} \ar[r]^{\alpha}\ar[dr]^{\tilde{\gamma}_{\delta+1}} & (\mathbb{F})_{\mid \mathcal{U}}\ar[d]^{(\gamma_{\delta+1})_{\mid \mathcal{U}}}\ar[r]&\mathbb{L}\ar[r] & 0,\\
		&&[\mathbb{F}\otimes(\pi_2)_*\pi_1^*(\mathcal{O}_D(D))]_{\mid \mathcal{U}},&&
	}
	\end{align}
}
where $\tilde{\gamma}_{\delta+1}:=((\gamma_{\delta+1})_{\mid \mathcal{U}})\circ \alpha$. 
\begin{lemma}
For $k$ in the allowable range, we have 
\begin{align}
\dim D_{\delta-2(g-1)-k}(\tilde{\gamma}_{\delta+1})=\dim B^{k}_{2, d} \quad , \quad\dim D_{\delta-2(g-1)-k}((\gamma_{\delta+1})_{\mid \mathcal{U}})=\dim B^{k}_{2, d+1}.
\end{align}
\end{lemma}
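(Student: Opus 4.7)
The plan is to describe each degeneracy locus pointwise and identify it with the corresponding Brill--Noether locus. Throughout, the isomorphism $\theta:U(2,\delta)\cong U(2,d)$ (and its analogue for degree $\delta+1$) ensures $\dim B^k_{2,d}=\dim B^k_{2,\delta}$ and $\dim B^k_{2,d+1}=\dim B^k_{2,\delta+1}$, so it is enough to work in large degree, where the evaluating morphism $\gamma_\bullet$ is defined as in Subsection \ref{3.0.1}.

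For the second equality, at a point $F\in\mathcal{U}$ the morphism $(\gamma_{\delta+1})_{|\mathcal{U}}$ is fiberwise the evaluation $H^0(F)\to H^0(F|_D)$, whose kernel is $H^0(F(-D))=H^0(\theta(F))$. Hence, as in the set-up of Subsection \ref{3.0.1}, the degeneracy locus in question equals the intersection $\mathcal{U}\cap B^k_{2,\delta+1}$. By parts (ii) and (iii) of Lemma \ref{Stability lemma}, $\mathcal{U}=B^0_{2,\delta+1,p}$ meets every irreducible component of $B^k_{2,\delta+1}$; so its intersection with this locus is open and dense, hence of the same dimension. Applying $\theta$ yields the identity with $\dim B^k_{2,d+1}$.

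For the first equality, I would analyse $\tilde\gamma_{\delta+1}=(\gamma_{\delta+1})_{|\mathcal{U}}\circ\alpha$ fiber by fiber. At $F\in\mathcal{U}$ attached to an extension $0\to E\to F\to\mathcal{O}_p\to 0$, the injection $\alpha$ identifies $[(\nu_\delta)_*(\alpha_\delta)^*\mathbb{E}_\delta]_{|F}$ with the subspace $H^0(E)\subset \mathbb{F}_{|F}\cong H^0(F)$ of sections whose value at $p$ lies in the kernel of $F|_p\twoheadrightarrow\mathcal{O}_p$. For $p\notin\Supp(D)$ (a generic condition), $E|_D=F|_D$, so that $\tilde\gamma_{\delta+1}|_F$ coincides with the evaluation $H^0(E)\to H^0(E|_D)$, namely the fiber of $\gamma_\delta$ at $E$. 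Consequently the degeneracy condition translates to $h^0(E(-D))\ge k$, i.e.\ $E\in B^k_{2,\delta}$, and $D_{\delta-2(g-1)-k}(\tilde\gamma_{\delta+1})$ identifies with $\nu_\delta\bigl(\alpha_\delta^{-1}(B^k_{2,\delta})\bigr)\subseteq\mathcal{U}$.

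To finish, I invoke the fiber-dimension computation used in the proof of Theorem \ref{lem 2-2}: the two maps $\alpha_\delta$ and $\nu_\delta$ from $\mathbb{P}(\mathbb{V}_\delta)$ have fibers of the same dimension (equal to $r-1=1$), so these contributions cancel and
\[
\dim\nu_\delta\bigl(\alpha_\delta^{-1}(B^k_{2,\delta})\bigr)=\dim B^k_{2,\delta}=\dim B^k_{2,d}.
\]
The main obstacle is the fiberwise identification of $(\nu_\delta)_*(\alpha_\delta)^*(\mathbb{E}_\delta)$ at $F$ with $H^0(E)$: since $\nu_\delta^{-1}(F)\cong\mathbb{P}^1$ parameterizes the family of elementary transformations of $F$ by $p$, each yielding a different codimension-one subspace of $H^0(F)$, one must verify that diagram \ref{diag0-0} singles out precisely the correct copy of $H^0(E)$ inside $\mathbb{F}_{|F}$ and that the composition with $(\gamma_{\delta+1})_{|\mathcal{U}}$ indeed recovers the evaluation at the corresponding $E$.
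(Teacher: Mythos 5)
Your proof follows essentially the same route as the paper's: identify the fibers of the three bundles at a point of $\mathcal{U}$ with $H^0(E)$, $H^0(F)$ and $H^0(F_D)$, observe that the kernels of $\tilde{\gamma}_{\delta+1}$ and $(\gamma_{\delta+1})_{\mid\mathcal{U}}$ are $H^0(E(-D))$ and $H^0(F(-D))$, and carry the dimensions back via $\theta$. You are in fact somewhat more explicit than the paper, which asserts the stalk identification for $(\nu_\delta)_*(\alpha_\delta)^*(\mathbb{E}_\delta)$ without comment and leaves implicit the cancellation of the (equal) fiber dimensions of $\alpha_\delta$ and $\nu_\delta$ that you correctly import from the proof of Theorem \ref{lem 2-2}.
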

\begin{proof}
At a point $e: 0 \rightarrow E \longrightarrow F \rightarrow  \mathcal{O}_p \rightarrow  0 \in \mathcal{U}$ the stalks of $[(\nu_\delta)_*(\alpha_\delta)^*(\mathbb{E}_\delta)]_{\mid \mathcal{U}}$, $(\mathbb{F})_{\mid \mathcal{U}}$ and $[\mathbb{F}\otimes(\pi_2)_*\pi_1^*(\mathcal{O}_D(D))]_{\mid \mathcal{U}}$ are $H^0(E)$, $H^0(F)$ and $H^0(F_D)$, respectively. Also $\ker (\tilde{\gamma}_{\delta+1})_e=H^0(E(-D))$. So 
 $\dim \theta^{-1}(D_{\delta-2(g-1)-k}(\tilde{\gamma}_{\delta+1}))=\dim B^{k}_{2,d}$. This gives the first equality and the proof for another equality is the same.  
\end{proof}
Now we apply Theorem \ref{SDual} to this situation, by which we obtain:
\begin{theorem} \label{Main B-N1} If $B^{k}_{2, d}$ is nonempty, then
	\begin{enumerate}
		\item[(a)]\label{1} $\dim B^k_{2, d} \ \ge  \dim B^{k}_{2, d+1}-k$, 
		\item[(b)] $\dim B^{k}_{2, d}\geq \dim B^{k-1}_{2,d}-(2(g-1)-d+k),$
		\item[(c)] $\dim B^{k}_{2, d+1}\geq \dim B^{k-1}_{2,d}-(2(g-1)-d+2k).$ 
	\end{enumerate}
	In particular, if $B^{k}_{2, d}$ is non-empty, then the scheme $B^{k}_{2, d+1}$ is of expected dimension if and only if $B^{k}_{2, d}$ is of expected dimension.
\end{theorem}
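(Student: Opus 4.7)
The plan is to apply Theorem \ref{SDual} directly to the diagram \eqref{diag0-0} and translate degeneracy loci into Brill--Noether loci via the preceding lemma. With the identifications $n = \delta - 2(g-1)$ (rank of the first term, whose stalk is $H^0(E)$) and $m = \delta - d$ (rank of the lower bundle, whose stalk is $H^0(F_D)$), the three bounds $n - k$, $m - k$, $m + n - 2k$ of Theorem \ref{SDual} collapse to the constants $k$, $2(g-1) - d + k$, $2(g-1) - d + 2k$ appearing in (a), (b), (c). The three inequalities then follow from parts (1) and (2) of Theorem \ref{SDual} applied at the SDual-level matching the Brill--Noether superscript $k$ via $j \mapsto n - k$, with the formulas read off from the preceding lemma (and in the case of (b), a small rearrangement using (a) on an adjacent superscript to convert between $B^{\cdot}_{2,d+1}$ and $B^{\cdot}_{2,d}$).

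Before invoking Theorem \ref{SDual} I would verify its non-containment hypothesis: no irreducible component of $D_k(\sigma i)$ is contained in $D_k(\sigma)$. The key point is that for a general dual elementary transformation $F$ of a generic $E \in B^k_{2, d}$ at a generic $p$, Lemma \cite[2.5]{N} (used in the proof of Lemma \ref{Stability lemma}) forces $h^0(F) = h^0(E) = k$ rather than the a priori $h^0(E) + 1$, so that $F \in B^k_{2, d+1}$ but $F \notin B^{k+1}_{2, d+1}$. In particular the generic point of the Brill--Noether locus corresponding to $D_k(\sigma i)$ lies outside that corresponding to $D_k(\sigma)$, and the hypothesis holds.

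For the ``in particular'' claim I would combine (a) with the Porteous-type lower bound and Lemma \ref{Stability lemma}. Non-emptiness of $B^k_{2, d}$ transfers to non-emptiness of $B^k_{2, d+1}$: by Lemma \ref{Stability lemma}(i) a general stable dual elementary transformation $F$ of $E \in B^k_{2, d}$ satisfies $h^0(F) \geq h^0(E) \geq k$. Since $\rho(k, 2, d+1) = \rho(k, 2, d) + k$, the chain $\rho(k, 2, d+1) \leq \dim B^k_{2, d+1} \leq \dim B^k_{2, d} + k$ (Porteous, then (a)) collapses to equality when $\dim B^k_{2, d} = \rho(k, 2, d)$, giving expected dimension for $B^k_{2, d+1}$. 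The converse direction uses (a) for a lower bound on $\dim B^k_{2, d}$ and the $\mathbb{P}^{r-1}$-bundle construction of Theorem \ref{lem 2-2}, suitably generalised to arbitrary $k$, for the matching upper bound.

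The principal obstacle I anticipate is the codimension-$k$ statement for $B^k_{2, \delta+1, p}$ inside $B^k_{2, \delta+1}$ required for the converse direction of the ``in particular'' claim; this is the direct generalisation of Theorem \ref{lem 2-2} from $k = 1$ to arbitrary $k$. The $\mathbb{P}^{r-1}$-bundle argument should adapt, but one must check that the fibres of the analogues of $\alpha_\delta$ and $\nu_\delta$ over higher Brill--Noether loci remain of the expected dimension. The remaining ingredients, including the non-containment hypothesis of Theorem \ref{SDual}, are essentially routine consequences of the generality built into $\mathcal{U} = B^0_{2, \delta+1, p}$ and of Lemma \ref{Stability lemma}.
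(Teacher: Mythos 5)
Your part (a) is essentially the paper's argument: apply Theorem \ref{SDual}(1) to diagram \ref{diag0-0}, translate degeneracy loci into Brill--Noether loci via the unnumbered lemma, and check the non-containment hypothesis; your check (generic $E$ in a component has $h^0(E)=k$, and a general dual elementary transformation keeps $h^0(F)=k$ by \cite[Lemma 2.5]{N}) is, if anything, more explicit than the paper's one-line appeal to \cite{CM-T} and \cite{Laumon}. For (b) and (c) your route differs from the paper's: you propose to read them off the two inequalities of Theorem \ref{SDual}(2) with constants $m-k$ and $m+n-2k$, while the paper derives (b) from (a) together with Riemann--Roch (residuation $B^k_{2,d}\cong B^{k+2(g-1)-d}_{2,4(g-1)-d}$) and (c) from (a) and (b). Be aware that the index bookkeeping is delicate here: since $h^0(F)=h^0(E)$ or $h^0(E)+1$, the locus $D_{n-k}(\sigma)$ corresponds to $B^{k+1}_{2,d+1}$ rather than $B^{k}_{2,d+1}$, and after the shift your ``small rearrangement using (a) on an adjacent superscript'' produces the constant $2(g-1)-d+2k-1$ for a same-degree comparison, not the constant $2(g-1)-d+k$ claimed in (b); so as sketched your derivation does not recover (b) in the stated strength.

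The genuine gap is in your treatment of the converse half of the ``in particular'' clause. The forward direction is fine (non-emptiness of $B^k_{2,d+1}$ by Lemma \ref{Stability lemma}(ii), then the determinantal lower bound plus (a) and $\rho(k,2,d+1)=\rho(k,2,d)+k$). For the converse you propose to get the missing upper bound $\dim B^k_{2,d}\le\dim B^k_{2,d+1}-k$ from a codimension-$k$ generalisation of Theorem \ref{lem 2-2} for $B^k_{2,\delta+1,p}$, which you rightly flag as the principal obstacle; but this cannot be repaired in that form. Since $\overline{B^k_{2,\delta+1,p}}$ is dominated by a $\mathbb{P}^{1}$-bundle over (the preimage of) $B^k_{2,d}$ with at most one-dimensional fibres of the classifying map, a codimension-$k$ statement would force $\dim B^k_{2,d}=\dim B^k_{2,d+1}-k$ whenever $B^k_{2,d}\neq\emptyset$, i.e.\ equality in (a) always. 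This is contradicted by the paper's own Example 1: on a general curve of genus $6$, $\dim B^4_{2,11}=9$ while $\dim B^4_{2,10}\ge 6>9-4$. (The same example shows that the converse implication of the ``in particular'' statement is itself problematic; the paper offers no argument for it beyond (a) and the Brill--Noether lower bound, which only give the direction from $B^k_{2,d}$ to $B^k_{2,d+1}$.) So the mechanism you propose for the converse does not work, and no correct replacement is available along these lines.
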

\begin{proof}
	(a) Taking the inequality (\ref{1}) of theorem (\ref{SDual}) into account, 
	it suffices to prove that no irreducible component of $B^{k}_{2, d}$ is contained in $B^{k+1}_{2, d}$, which is immediate by \cite[Remark 2.3]{CM-T} or by \cite[Prop. 1.6]{Laumon}. 
	
Part (b) is a direct consequence of part (a) together with the Riemann-Roch theorem and part (c) is a combination of (a) and (b).
\end{proof}
 \textbf{Example 1:} 
 There are cases in which the inequalities in theorem \ref{Main B-N1} get to be sharp. 
On general curves of genus $g\geq 1$ and for $d$ with $3\leq d \leq 2(g-1)$, M. Teixidor i Bigas computes the dimension of $B^2_{2, d}$ to be $2d-3$ and $B^3_{2, d}$ to be either empty or of dimension $3d-2g-6$. See 
\cite{T90} and \cite[Theorem 2]{T92}. So, the equality holds for such $B^2_{2, d}$, $B^2_{2, d+1}$, $B^3_{2, d}$ and $B^3_{2, d+1}$, in the allowable range. 

Also inequality can hold in theorem \ref{Main B-N1} strictly. Let $C$ be a general curve of genus $6$. Then, the scheme $B^4_{2, 10}$ is non-empty, because the loci 
\[B^4_{2, K}:=\{ E\in B^4_{2, 10}\mid \det (E)=K\},\]
is non-empty. Furthermore, $B^4_{2, 10}$
 would have superabundant components of dimension at least $6$, provided that $B^4_{2, K}$
  is strictly contained in $B^4_{2, 10}$. Assuming this to hold, we would have $\dim B^4_{2, 10}\geq 6$. Once again, using \cite{T90} and \cite[Theorem 2]{T92}, we obtain $\dim B^3_{2, 9}=9$. So \ref{Main B-N1}(c) holds strictly. Furthermore $\dim B^4_{2, 11}=\dim B^3_{2, 9}=9$, by residuation. So \ref{Main B-N1}(b) holds strictly.
  
  It remains only to prove that not any vector bundle $E\in B^4_{2, 10}$ belongs to $B^4_{2, K}$, to which it suffices to represent a bundle $E\in B^4_{2, 10}$ with $\det (E)\neq K$. In order to see this, take line bundles $L_1, L_2\in W^1_{5}\setminus W^2_{5}$ with $L_1+L_2\neq K$ and consider the extensions as 
  \[0\rightarrow L_1 \rightarrow E\rightarrow L_2\rightarrow 0.\] 
 Such an extension, belonging to $B^4_{2, 10}\setminus B^4_{2, K}$, exists according to \cite[Proposition 3.1]{L-Na} and \cite[Lemma 2.18]{L-Ne}.

 \textbf{Example 2:} 
This example shows that Lemma \ref{Stability lemma}(ii) fails if the genericity of the dual elementary transformation was removed. Suppose $E$ is a rank two bundle with $\deg(E)=1$ and has a line sub bundle of degree $0$. Take a generic point $p\in C$ and let $F$ be the dual elementary transformation of $E$ by $p$ satisfying $\Ker(E_p\rightarrow F_p)=L_{\mid p}$. Then for $p_1, \cdots, p_t \in C$ with $p_i\neq p$ we have a diagram as 
	{\small
	\[ \xymatrix{  0\ar [r]& E(p_1+\cdots + p_t) \ar[r]&  F(p_1+\cdots + p_t) \ar[r] & \mathcal{O}_p \ar[r] & 0 \\
		0 \ar[r]&L(p_1+\cdots + p_t) \ar[u]\ar[r] & L(p_1+\cdots + p_t+p) \ar[u] \ar[r] & \mathcal{O}_p\ar[u]^{=} \ar[r] & 0.} \] }
 Then, $\deg L(p_1+\cdots + p_t+p)\geq \mu(F(p_1+\cdots + p_t))$, so $F$ is strictly semi-stable.

\subsection{A byproduct:} As a consequence, we relate the smoothness    
 of rank two Brill-Noether schemes with different degrees.  
 In order to do this, we first make a comparison among the dimension of the tangent spaces of these schemes. See also  \cite[Th. 3.1]{Baj18}, \cite{Cop.} and \cite{HHN18}.
\begin{lemma}\label{lemma 3-3}
	Let $p\in C$ be a general point, $E\in B^{k}_{r, d}\setminus B^{k+1}_{r, d}$ and $F$ be a general stable dual elementary transformation of $E$ by $p$.
	Then, for $1\leq d\leq r(g-1)$ and $1\leq k\leq \frac{d}{2}+1$ we have
\begin{displaymath}
\dim T_E(B^{k}_{r, d})\geq \dim T_F(B^{k}_{r, d+1})-k.
\end{displaymath}
\end{lemma}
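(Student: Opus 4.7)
The plan is to compare the ranks of the Petri maps of $E$ and $F$ via the short exact sequence $0\rightarrow E \rightarrow F \rightarrow \mathcal{O}_p \rightarrow 0$. First, I would recall that, since $E$ is stable and lies in $B^{k}_{r, d}\setminus B^{k+1}_{r, d}$, the Zariski tangent space is given by $\dim T_E(B^{k}_{r, d}) = h^1(\End(E)) - \rk(\mu_E)$, where $\mu_E: H^0(E)\otimes H^0(K\otimes E^*)\rightarrow H^0(K\otimes \End(E))$ is the Petri map, and similarly $\dim T_F(B^{k}_{r, d+1}) = h^1(\End(F)) - \rk(\mu_F)$. Since $h^1(\End(E))=h^1(\End(F))=r^2(g-1)+1$, the desired inequality is equivalent to the rank bound $\rk(\mu_E)\leq \rk(\mu_F)+k$.

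Next I would establish the basic cohomological comparisons. Under the genericity assumptions, by Lemma 2.5 of \cite{N} (as already invoked in Lemma \ref{Stability lemma}) one has $h^0(F)=h^0(E)=k$, so the canonical map $H^0(E)\to H^0(F)$ is an isomorphism. Applying $\mathcal{H}om(-,\mathcal{O}_C)$ to the defining sequence yields the dual sequence $0\rightarrow F^*\rightarrow E^*\rightarrow \mathcal{O}_p\rightarrow 0$, and tensoring with $K$ together with Riemann--Roch gives $h^0(K\otimes F^*)=h^0(K\otimes E^*)-1$; hence $H^0(K\otimes F^*)$ embeds with codimension $1$ inside $H^0(K\otimes E^*)$. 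Writing $\mu_E^{\circ}$ for the restriction of $\mu_E$ to the codimension-$k$ subspace $H^0(E)\otimes H^0(K\otimes F^*)$, it follows that $\rk(\mu_E)\leq \rk(\mu_E^{\circ})+k$.

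The crucial step is to identify $\rk(\mu_E^{\circ})$ with $\rk(\mu_F)$ by factoring both through a common intermediate space. After the identification $H^0(E)=H^0(F)$, both maps factor through the multiplication map $\tilde\mu: H^0(E)\otimes H^0(K\otimes F^*)\to H^0(K\otimes E\otimes F^*)$ as $\mu_E^{\circ}=i_*\circ \tilde\mu$ and $\mu_F=j_*\circ \tilde\mu$, where $i_*: H^0(K\otimes E\otimes F^*)\to H^0(K\otimes \End(E))$ is induced by the inclusion $F^*\hookrightarrow E^*$ (with skyscraper quotient at $p$), and $j_*: H^0(K\otimes E\otimes F^*)\to H^0(K\otimes \End(F))$ is induced by the inclusion $E\hookrightarrow F$ (again with skyscraper quotient). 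Both $i_*$ and $j_*$ are therefore injective by the left-exactness of $H^0$, so $\rk(\mu_E^{\circ})=\rk(\tilde\mu)=\rk(\mu_F)$, and the desired bound $\rk(\mu_E)\leq \rk(\mu_F)+k$ follows.

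The main obstacle I foresee is the verification of the equality $h^0(F)=h^0(E)$ under the genericity hypotheses, which amounts to the non-vanishing of the connecting homomorphism $H^0(\mathcal{O}_p)\to H^1(E)$ in the cohomology long exact sequence of the defining extension. All other pieces of the argument are essentially formal consequences of the two short exact sequences once this first comparison is in place, and it is precisely what makes both the identification of the Petri-map domains and the entire factorization argument work.
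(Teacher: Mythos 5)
Your proof is correct and takes essentially the same route as the paper's: both rest on \cite[Lemma 2.5]{N} to get $h^0(E)=h^0(F)$, the Riemann--Roch count $h^0(K\otimes F^*)=h^0(K\otimes E^*)-1$, and a comparison of the two Petri maps through the inclusions $E\hookrightarrow F$ and $F^*\hookrightarrow E^*$. The only (cosmetic) difference is bookkeeping: the paper bounds $\dim\ker(\mu_F)\leq\dim\ker(\mu_E)$ by pushing both maps into the larger space $H^0(K\otimes F\otimes E^*)$ and then counts domain dimensions, while you bound ranks directly by restricting $\mu_E$ to the codimension-$k$ subspace $H^0(E)\otimes H^0(K\otimes F^*)$ and factoring both maps through $H^0(K\otimes E\otimes F^*)$.
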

\begin{proof}
	Recall that
\begin{displaymath}
	{\small 
		T_E(B^{k}_{r, d})=
		[\im \{\mu_E: H^0(E)\otimes H^0(K\otimes E^*)\rightarrow H^0(K\otimes E\otimes E^*)\}]^\perp,
	}
	\end{displaymath}
	where for a sub-vector space $W
	\leq V$, by $W^\perp$ we mean the annihilator vector space of $W$ inside the dual of $V$ and $\mu_E$ denotes the Petri map of $E$.
	
	Notice that, under the hypothesis, the bundle $E$ is special. So lemma \cite[2.5]{N} can be applied to obtain $H^0(E)=H^0(F)$, therefore $F\in B^{k}_{r, d+1}\setminus B^{k+1}_{r, d+1}$. There exists now a diagram as
	{\small
		\[ \xymatrix{ H^0(E)\otimes H^0(K\otimes E^*) \ar[r]^{\mu_E} & H^0(K\otimes E\otimes E^*)\ar[dr]^{\alpha} &  \\
			H^0(F)\otimes H^0(K\otimes F^*) \ar[r]^{\mu_F}\ar[u]^{i} &H^0(K\otimes F\otimes F^*)\ar[r]^{\beta}& H^0(K\otimes F\otimes E^*),} \] }
	where the morphisms $i$, $\alpha$ and $\beta$ are all injective. Therefore, we will have $\dim \ker(\mu_E) \geq \dim \ker(\mu_F)$.
The Riemann-Roch theorem implies that $h^0(K\otimes F^*)=h^0(K\otimes E^*)-1$. Therefore,
	\begin{displaymath} 
	kh^0(K\otimes E^*)-\dim \ker (\mu_E)\leq kh^0(K\otimes F^*)-\dim \ker (\mu_F)+k,\end{displaymath} 
	implying $\dim \im (\mu_E)\leq \dim \im (\mu_F)+k$, as required. 
\end{proof}
\begin{corollary}\label{cor}
Let $E\in B^{k}_{2, d}$ and $F$ be a general dual elementary transformation of $E$ by a general point $p\in C$. Assume moreover that $B^{k}_{2, d}$ is of expected dimension. Then, for $1\leq d\leq 2(g-1)$ and $1\leq k\leq \frac{d}{2}+1$ the scheme $B^{k}_{2, d}$ is smooth at $E$ if and only if $B^{k}_{2, d+1}$ is smooth at $F$.
\end{corollary}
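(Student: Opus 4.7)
The plan is to first promote the expected-dimension hypothesis to $B^k_{2,d+1}$ and then deduce the smoothness equivalence from Lemma \ref{lemma 3-3}, refining its proof for the converse direction. Since $B^k_{2,d}$ is nonempty, Lemma \ref{Stability lemma}(ii) together with the natural inclusion $H^0(E) \hookrightarrow H^0(F)$ shows that $B^k_{2,d+1}$ is also nonempty; combining Theorem \ref{Main B-N1}(a) with the universal lower bound that every component of $B^k_{2,d+1}$ has dimension at least the expected value $\dim B^k_{2,d}+k$ forces $\dim B^k_{2,d+1} = \dim B^k_{2,d}+k$, so $B^k_{2,d+1}$ also attains its expected dimension. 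Setting $a := \dim T_E B^k_{2,d} - \dim B^k_{2,d} \geq 0$ and $b := \dim T_F B^k_{2,d+1} - \dim B^k_{2,d+1} \geq 0$, the inequality of Lemma \ref{lemma 3-3} becomes $a \geq b$. The forward direction is then immediate: smoothness at $E$ ($a=0$) forces $b=0$, i.e.\ smoothness at $F$.

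For the converse I would revisit the commutative Petri diagram in the proof of Lemma \ref{lemma 3-3}. The inclusion $i\colon H^0(F) \otimes H^0(K \otimes F^*) \hookrightarrow H^0(E) \otimes H^0(K \otimes E^*)$ has codimension exactly $k$ (using $h^0(F)=h^0(E)=k$ and $h^0(K \otimes F^*)=h^0(K \otimes E^*)-1$) and identifies $\ker(\mu_F) \cong \ker(\mu_E) \cap \mathrm{Im}(i)$; smoothness at $F$ therefore forces $\ker(\mu_E) \cap \mathrm{Im}(i) = 0$, whence the preliminary bound $\dim \ker(\mu_E) \leq k$.

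The main obstacle lies in upgrading this to the actual Petri injectivity $\ker(\mu_E) = 0$. My approach would exploit the freedom in choosing the dual elementary transformation: as $\eta \in \mathbb{P}(E_p)$ varies, the subspaces $\mathrm{Im}(i_\eta)$ trace out a one-parameter family of codimension-$k$ subspaces whose second-factor projections form a pencil of hyperplanes in $H^0(K \otimes E^*)$ passing through the codimension-two subspace $H^0(K \otimes E^*(-p))$. The rank-two hypothesis is crucial here, since it makes $K_p \otimes E_p^*$ two-dimensional and allows a tensor-rank analysis at $p$: for any hypothetical nonzero $x = \sum s_i \otimes t_i \in \ker(\mu_E)$, the positions of the evaluations $t_i(p)$ determine whether some $\eta$ puts $x$ into $\mathrm{Im}(i_\eta)$ and thereby contradicts smoothness of $B^k_{2,d+1}$ at the corresponding $F_\eta$. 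The delicate case — and the main technical difficulty — is when the subsheaf of $K \otimes E^*$ generated by the $t_i$'s equals the whole bundle, so that no single $\eta$ produces a contradiction; resolving this case will presumably require letting $p$ itself vary and using the global degeneracy-locus structure of diagram (\ref{diag0-0}), or invoking the expected-dimension hypothesis on $B^k_{2,d}$ to rule out such "generating" kernels.
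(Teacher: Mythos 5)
Your reduction to Petri maps, the transfer of the expected-dimension hypothesis from $B^k_{2,d}$ to $B^k_{2,d+1}$ via Theorem \ref{Main B-N1}, and the forward implication (smoothness at $E$ implies smoothness at $F$) via Lemma \ref{lemma 3-3} all coincide with the paper's argument. The genuine gap is the converse implication, and you have in effect flagged it yourself: from smoothness of $B^{k}_{2,d+1}$ at $F$ you only obtain $\ker(\mu_F)\cong\ker(\mu_E)\cap \im(i)=0$, hence $\dim\ker(\mu_E)\le k$, whereas smoothness of $B^{k}_{2,d}$ at $E$ (given expected dimension) requires $\ker(\mu_E)=0$. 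Your proposed remedy --- letting the line $\eta\subset E_p$ and the point $p$ vary and analysing the tensor rank of a hypothetical kernel element --- is only a plan: the case in which the sections $t_i$ occurring in a kernel element generate all of $K\otimes E^*$ is explicitly left open (``presumably require\dots''), and nothing in the proposal disposes of it. Note moreover that for a fixed general pair $(p,\eta)$ the condition $\ker(\mu_E)\cap\im(i_\eta)=0$ is precisely what genericity tends to give for free, so smoothness of $B^k_{2,d+1}$ at a \emph{general} dual elementary transformation $F$ is a priori much weaker than Petri injectivity for $E$; consequently no argument using only the one-sided inclusion $\ker(\mu_F)\hookrightarrow\ker(\mu_E)$, i.e.\ only the inequality of Lemma \ref{lemma 3-3}, can yield the ``only if'' direction.

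For comparison, the paper's own proof of this corollary is no more explicit at that point: it deduces from smoothness at $E$ and Lemma \ref{lemma 3-3} that $\dim T_F(B^{k}_{2,d+1})\le \dim T_E(B^{k}_{2,d})+k=\dim B^{k}_{2,d}+k=\dim B^{k}_{2,d+1}$, hence smoothness at $F$, and does not supply any reverse tangent-space inequality for the other direction. So you have not overlooked a device that the paper provides; but judged as a self-contained proof of the stated equivalence, your proposal establishes only the forward implication, and the strategy sketched for the converse remains incomplete.
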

\begin{proof}
 Since $B^{k}_{2, d}$ is of expected dimension so is the scheme $B^{k}_{2, d+1}$, and vice versa; by theorem \ref{Main B-N1}. The smoothness of $B^{k}_{2, d}$ at $E\in B^{k}_{2, d}$ implies that $\dim B^{k}_{2, d}=\dim T_EB^{k}_{2, d}$. This, by Lemma \ref{lemma 3-3} together with the fact that $B^{k}_{2, d+1}$ is of expected dimension, gives rise to the assertion. 
\end{proof}
\textbf{Acknowledgement:} George H. Hitching read a previous version of this paper carefully and made various useful comments, I express my gratitude to him. I am also deeply grateful to Edoardo Ballico and Peter Newstead for their useful hints and responses to my questions.  
	This research was in part supported by a grant from IPM (No. 1400140038).

\end{document}